\documentclass[pdflatex,sn-mathphys-num,a4paper]{sn-jnl}

\usepackage{graphicx}%
\usepackage{multirow}%
\usepackage{amsmath,amssymb,amsfonts}%
\usepackage{amsthm}%
\usepackage{mathrsfs}%
\usepackage[title]{appendix}%
\usepackage{xcolor}%
\usepackage{textcomp}%
\usepackage{manyfoot}%
\usepackage{booktabs}%
\usepackage{algorithm}%
\usepackage{algorithmicx}%
\usepackage{algpseudocode}%
\usepackage{listings}%

\theoremstyle{thmstyleone}%
\newtheorem{theorem}{Theorem}
\theoremstyle{thmstyletwo}%
\newtheorem{example}{Example}%
\newtheorem{remark}{Remark}%

\theoremstyle{thmstylethree}%

\newcommand{\R}{\mathbb{R}}
\newcommand{\xii}{{|\xi|}}
\newcommand{\lloc}{{\mathrm{loc}}}

\begin{document}

\title[Semilinear waves with damped oscillations]{The critical exponent for semilinear wave equations\\ with damped oscillations}


\author[1]{\fnm{Marcello} \sur{D'Abbicco}}\email{marcello.dabbicco@uniba.it}

\affil[1]{\orgdiv{Department of Mathematics}, \orgname{University of Bari}, \orgaddress{\street{Via E. Orabona 4}, \city{Bari}, \postcode{70125}, \country{Italy}}}


\abstract{We determine the existence exponent for the global small data solutions for a class of evolution equations with power nonlinearity under the effect of a dissipation that dampen the oscillations at low frequencies and produce overdamping at high frequencies. In the case of the wave equation with viscoelastic dissipation, the counterpart of blow-up in space dimension $n=3$ has been very recently proved by Wenhui Chen, showing that the existence exponent is optimal.}

\keywords{wave equation, critical exponent, dispersive estimates, damping}



\maketitle

\section{Introduction}\label{sec1}

In this paper, we study semilinear evolution equations with a dissipation
\begin{equation}\label{eq:CPgen} \begin{cases}
u_{tt} +Au + Bu_t =f(u), & t>0, \ x\in\R^n\\
u(0,x)=0,\\
u_t(0,x)=u_1(x).
\end{cases}\end{equation}
Here $A=-\Delta$ or a more general second order operator (see later, \eqref{eq:A}) and $B=(-\Delta)^{\frac\theta2}$ for $\theta\in(1,2]$ or a more general, possibly inhomogeneous, operator (see later, \eqref{eq:B} and \eqref{eq:b-low}-\eqref{eq:b-high}). We assume that the nonlinearity verifies the following inequality
\begin{equation}\label{eq:f}
f(0)=0,\qquad |f(u)-f(v)| \leq C\,|u-v|\,\big(|u|^{\alpha}+|v|^{\alpha}\big),\qquad u,v\in\R,
\end{equation}
with $\alpha>0$. In some cases, it is sufficient to assume~\eqref{eq:f} for $|u|,|v|\ll1$, that is,
\begin{equation}\label{eq:fsmall}
f(0)=0,\qquad |f(u)-f(v)| \leq C\,|u-v|\,\big(|u|^{\alpha}+|v|^{\alpha}\big),\qquad |u|,|v|\ll1.
\end{equation}
A case of interest for~\eqref{eq:f} is $f(u)=|u|^{1+\alpha}$,\footnote{Assumption~\eqref{eq:f} has the advantage that it only relies on Lipschitz regularity; several nonlinearities of classical interest, as $|u|^{1+\alpha}$, $\pm u|u|^{\alpha}$ verify assumption~\eqref{eq:f}. For instance, it is sufficient to apply the fundamental theorem of calculus to get
\[\begin{split}
||u|^\alpha-|v|^\alpha|
    & = \Big| \alpha\int_{|v|}^{|u|} s^{\alpha-1} \,ds\Big| \leq \alpha ||u|-|v||\,\sup \{ |u|^{\alpha-1},|v|^{\alpha-1}\} \\
    & \leq \alpha |u-v|\,(|v|^{\alpha-1}+|u|^{\alpha-1}).
\end{split}\]}
while for~\eqref{eq:fsmall} we can find more examples. In particular, if $f(u)=\sum_{i=j}^k f_j(u)$, with $f_j(u)$ verifying~\eqref{eq:f} for some power $\alpha_j$, then~\eqref{eq:fsmall} holds for $\alpha=\min_{j=1,\ldots,k}\alpha_j$.

We study the case in which the dissipation term $Bu_t$ dampens the oscillations at low frequencies, but it creates the overdamping at high frequencies, see later, \eqref{eq:DO}-\eqref{eq:OD}.

We look for determining (global-in-time) existence exponents for~\eqref{eq:CPgen}. By existence exponent~$q_c$ we mean that for $1+\alpha>q_c$ and for sufficiently small initial data in some space, there is a unique global-in-time solution (in general, a weak solution), possibly under some restrictions from above on $\alpha$.

The obtained exponent is of great interest when it is possible to prove that it is critical, in the sense that for $\alpha<q_c-1$, there are no global-in-time solutions, generally under some sign assumption on the initial data, but with no assumption on their size. The case $\alpha=q_c-1$ may belong to the existence or to the nonexistence region, according to different scenarios.

In absence of dissipation, i.e.. when $B=0$, the critical exponent for problem the semilinear wave equation~\eqref{eq:CPgen} with $A=-\Delta$ and $f(u)=|u|^{1+\alpha}$ was conjectured to be $q_c=\gamma(n-1)$ where $\gamma=\gamma(\kappa)$ is the solution to
\begin{equation}\label{eq:strauss}
\frac{\gamma-1}{\gamma+1}\,\frac\kappa2=\frac1\gamma.
\end{equation}
by W. Strauss~\cite[p.112]{Strauss} in space dimension~$n\geq2$:

\begin{quote}
\emph{``We conjecture that the critical power in John's theorem in $n$ space dimensions is $\gamma(n-1)$.''}
\end{quote}

The conjecture was motivated by the first results obtained by John in space dimension $n=3$ and by Glassey in space dimension $n=2$ and by a comparison for the critical exponents for other dispersive models. The result was later proved in a series of papers~\cite{A, Georgiev, GLS, G, G2, JZ, K5, LS96, Sc, Si, T, YZ06, Zhou}. The critical case $1+\alpha=\gamma(n-1)$ belongs to the nonexistence range.

The Strauss critical exponent~$\gamma(\kappa)$ in~\eqref{eq:strauss} is typical of dispersive models (see, for instance, \cite{Cazenave1992,NK02} for Schr\"odinger models, and~\cite{ChoOzawa2007DCDS,DAL26JMAA} for Boussinesq models), and this explains the reason behind the fact that $\kappa=n-1$ for the wave model, while $\kappa=n$ for the Schr\"odinger equation. The dispersion strength is indeed related to the rank of the Hessian of the phase function, and the Hessian of $\xii$ has rank $n-1$ (maximal value for a homogeneous function of degree~$1$), while the Hessian of $\xii^a$ has rank $n$ for $a\in(0,1)\cup(1,+\infty)$.

More explicitly, there is a strong connection behind the decay rate of dispersive and the exponent in~\eqref{eq:strauss}. In particular, for the wave model, a dispersive estimate of interest is
\[ \|u(t,\cdot)\|_{L^{2+\alpha}} \leq Ct^{-\frac{n-1}2\,\frac{\alpha}{\alpha+2}} \big( \|u_0\|_{\dot B_{1+\frac1{\alpha+1}},1}^s + \|u_1\|_{\dot B_{1+\frac1{\alpha+1}},1}^{s-1} \big),\quad s=\frac{n+1}2\,\frac{\alpha}{\alpha+2}, \]
and $\alpha+1$ times the exponent of $t$ is integrable for $t\in[1,+\infty)$ if, and only if,
\[ (\alpha+1)\,\frac{n-1}2\,\frac\alpha{\alpha+2} >1 \iff 1+\alpha>\gamma(n-1). \]
The situation becomes very different if a classic dissipation is added to the wave equation, that is, if $A=-\Delta$ and $Bu_t=u_t$ in~\eqref{eq:CPgen}. A. Matsumura~\cite{M76} determined the existence of small data global-in-time solutions for~$\alpha>2/n$ in space dimension~$n=1,2$. The result was extended to any space dimension~$n\geq3$ by G. Todorova and B. Yordanov~\cite{TY01}, with the nonexistence counterpart in the critical case proved in~\cite{Z01}. The critical case for more general nonlinearities has been recently discussed in~\cite{EGR} (see also~\cite{DAbbiccoGirardi2023,ChenReissig24,Girardi25,ChenGirardi25}). The exponent $1+2/n$ is related to the decay rate of the $L^1-L^{1+\alpha}$ damped wave equation, that is, the same decay rate of the heat equation; for the heat equation, the following estimate holds:
\[ \|u(t,\cdot)\|_{L^{1+\alpha}} \leq Ct^{-\frac{n}2\left(1-\frac1{\alpha+1}\right)} \|u(0,\cdot)\|_{L^1}. \]
In this case, the decay rate of $\|u(t,\cdot)\|_{L^{1+\alpha}}^{1+\alpha}$ is integrable for $t\in[1,+\infty)$ if, and only if,
\[ \frac{n}2\alpha >1 \iff \alpha>\frac2n. \]
Not surprisingly, the critical exponent $q_c=1+2/n$ is the same of the semilinear heat equation investigated by H. Fujita~\cite{F66}.

The analogy between the classical damped wave equation and the heat equation is due to the diffusion phenomenon showed for the associated linear model~\cite{HM, HT, MN03, N03}:
\begin{equation}\label{eq:CPlin} \begin{cases}
u_{tt} -\Delta u + Bu_t =0, & t>0, \ x\in\R^n\\
u(0,x)=u_0(x),\\
u_t(0,x)=u_1(x).
\end{cases}\end{equation}
At low frequencies, the action of the damping is so strong that it cancels the oscillations and make the solution's asymptotic profile the same of the heat equation. This scenario is physically interpreted as \emph{overdamping}.

To clarify the physical meaning, we may consider the equation for the damped harmonic oscillator
\[ \begin{cases}
y'' + by' + my =0, t>0,\\
y(0)=y_0,\\
y'(0)=y_1,
\end{cases} \]
with $b,m>0$. The solution exponentially decays as $t\to\infty$, but it has \emph{damped oscillations} if $b^2<4m$,
\begin{equation}\label{eq:DO}
y = e^{-\frac{b}2t}\left( y_0\,\cos(\omega t) + \frac{y_1+(b/2)y_0}{\omega}\,\sin(\omega t) \right),\qquad \omega=\sqrt{m-(b/2)^2},
\end{equation}
while for $b^2>4m$ the oscillations are canceled by the \emph{overdamping}:
\begin{equation}\label{eq:OD}
y = \frac{2y_1+(b+D)y_0}{2D}\,e^{-\frac{b-D}2t} - \frac{2y_1+(b-D)y_0}{2D}\,e^{-\frac{b+D}2t},\qquad D=\sqrt{b^2-4m}.
\end{equation}
The name ``overdamping'' comes from the fact that the following quantity is decreasing with respect to $b$:
\begin{equation}\label{eq:ODasymp}
\frac{b-D}2=\frac{2m}{b+D}\sim \frac{m}b,\quad \text{as $(b/m)\to\infty$.}
\end{equation}
In the limit case $b^2=4m$ there can be at most one oscillation: 
\[ y = \big(y_0+(y_1+(b/2)y_0)t\big)\,e^{-\frac{b}2t}. \]
When $B=(-\Delta)^{\frac\theta2}$ with $\theta\in(0,1)$ in~\eqref{eq:CPlin}, a diffusion phenomenon still holds, as discussed in~\cite{K00,DAE14JDE} and it leads to a critical exponent of Fujita type for~\eqref{eq:CPgen} that corresponds to the diffusive problem $\partial_t+(-\Delta)^{1-\frac\theta2}$, namely (see~\cite{DAR14,DAE14NA}):
\[ q_c = 1 + \frac2{n-\theta}. \]
In the special case $B=\mu(-\Delta)^{\frac12}$, the equation becomes scale-invariant and the oscillations are canceled in the solution to~\eqref{eq:CPlin} if, and only if, $\mu>2$. Nevertheless, for any $\mu>0$, the critical exponent for~\eqref{eq:CPgen} remains $q_c=1+2/(n-1)$ (see~\cite{DAR14,DA14proc}).

The situation was less clear for the case $B=(-\Delta)^{\frac\theta2}$ for $\theta\in(1,2]$. In this case, at low frequencies, one has damped oscillations for~\eqref{eq:CPlin}. However, the interplay between the dissipation related to the damping term, and the dispersion of the modified wave equation was not completely clear until recent times. In~\cite{S00}, Y. Shibata derived $L^1-L^1$ and $L^1-L^\infty$ estimates for the linear model~\eqref{eq:CPlin} with $B=-\Delta$. In particular, its result was optimal for $L^1-L^1$ estimates in odd space dimension, and for $L^1-L^\infty$ in any space dimension. The interpolation, however, did not produce sharp $L^1-L^q$ estimates for $q\in(1,\infty)$, due to the delicate nature of the interplay of dissipation and dispersion.

In~\cite{D'Abbicco-2025}, the effect of the interplay of dissipation and dispersion on $L^p-L^q$ estimates was clarified for the linear problem
\begin{equation}\label{eq:CPgenlin} \begin{cases}
u_{tt} +Au + Bu_t =0, & t>0, \ x\in\R^n\\
u(0,x)=0,\\
u_t(0,x)=u_1(x).
\end{cases}\end{equation}
The model considered included the study at low frequencies of~\eqref{eq:CPlin} when $B=(-\Delta)^{\frac\theta2}$, for $\theta\in(1,2]$.

In our paper, we apply the decay estimates obtained in~\cite[Theorem 6]{D'Abbicco-2025} for the solution to~\eqref{eq:CPgenlin}, to determine the corresponding existence exponent to the initial value problem~\eqref{eq:CPgen}.

During the preparation of this paper, the existence exponent $q_c$ in~\eqref{eq:qc} has been proved to be critical when $n=3$ and $A=B=-\Delta$ in~\cite{ChenPreprint}, in the sense that one has global-in-time small data solutions for $p>7/3$, and blow-up in finite time of the solutions when $p\leq 7/3$ (see later, Example~\ref{ex:3}).

This finding means that the $L^1-L^q$ estimates determined in~\cite{D'Abbicco-2025} for~\eqref{eq:CPgenlin} are, at least in some scenarios, an optimal tool to prove global-in-time existence of small data solutions to~\eqref{eq:CPgen} for supercritical powers. A more detailed discussion is provided in~\textsection\ref{sec:conclusion}.

We mention that when $A$ is homogeneous of degree $\sigma$ with $\sigma\neq1$, the critical exponent is very different in low space dimension (see~\cite{EL,DAE22}).


\section{Result}

In this paper, we assume that
\begin{equation}\label{eq:A}
A\varphi = \mathscr{F}^{-1}(\omega^2(\xi) \mathscr{F}\varphi), \qquad \varphi\in\mathcal S(\R^n),
\end{equation}
with $\omega$ homogeneous of degree~$1$, positive and smooth on the unit sphere (say, $\mathcal C^{n+3}(S^{n-1})$) and such that the Hessian matrix
\[ H_\omega = \begin{pmatrix}
\partial_{\xi_1}^2 \omega & \ldots & \partial_{\xi_1}\partial_{\xi_j}\omega & \ldots & \partial_{\xi_1}\partial_{\xi_n}\omega\\
\vdots & \vdots & \vdots & \vdots & \vdots \\
\partial_{\xi_1}\partial_{\xi_n}\omega  & \ldots & \partial_{\xi_j}\partial_{\xi_n}\omega & \ldots & \partial_{\xi_n}^2\omega
\end{pmatrix}  \]
has (maximal) rank $n-1$. Here and in the following, $\mathscr{F}$ denotes the Fourier transform with respect to the space variable (in general, for tempered distributions). We assume that
\begin{equation}\label{eq:B}
B\varphi = \mathscr{F}^{-1}(b(\xi) \mathscr{F}\varphi), \qquad \varphi\in\mathcal S(\R^n),
\end{equation}
with $b\in\mathcal C^{n+1}(\R^n\setminus\{0\})$ that verifies\footnote{The choice of the thresholds $\xii=2$ and $\xii=1/2$ is arbitrary, any couple of thresholds $2c$ and $c/2$ with $c>0$ would work; the conditions are equivalent, adjusting the constants $b_0,b_1$.}

\begin{align}
\label{eq:b-low}
b(\xi)\geq b_0\xii^{\theta_0}, \qquad \xii^{|\alpha|} |\partial_\xi^\alpha b(\xi)|\leq b_1\xii^{\theta_0}, \qquad |\alpha|\leq n+1, \quad \xii\leq 2\\
\label{eq:b-high}
b(\xi)\geq b_0\xii^{\theta_1}, \qquad \xii^{|\alpha|} |\partial_\xi^\alpha b(\xi)|\leq b_1\xii^{\theta_1}, \qquad |\alpha|\leq n+1, \quad \xii\geq \frac12,
\end{align}
with $\theta_0>1$ and $\theta_1\in(1,2]$.

The action of $A$ and $B$ is then extended by density to broader spaces than~$\mathcal S$.

In particular, conditions \eqref{eq:b-low}-\eqref{eq:b-high} hold for $\theta_1=\theta_2=\theta$ if $b$ is homogeneous of degree~$\theta\in(1,2]$, and $b\in\mathcal C^{n+1}(S^{n-1})$, positive.

To distinguish low frequencies and high frequencies estimates for the solution to~\eqref{eq:CPgenlin}, we fix $\chi\in\mathcal C_c^\infty(\R^n)$ such that $\chi=1$ for $\xii\leq1/2$ and $\chi=0$ for $\xii\geq2$, then we define the operators that project the solution to~\eqref{eq:CPgenlin}, at low and at high frequencies:
\[ P_0u=\mathscr{F}^{-1}(\chi\mathscr{F}u), \qquad P_1u=u-P_0u=\mathscr{F}^{-1}((1-\chi)\mathscr{F}u). \]
At low frequencies we rely on the following.
\begin{theorem}\label{thm:DAE}[Theorem 6 in~\cite{D'Abbicco-2025}]
Assume $\theta_0>1$. Fix $q\in(2,\infty]$ if $n=2$, $q\in(1,\infty]$ if $n=3$, or $q\in[1,\infty]$ if $n\geq4$. Then the solution to~\eqref{eq:CPgenlin} verifies the following low-frequencies $L^1-L^q$ estimate
\begin{equation}\label{eq:deltaq}\begin{split}
&\|P_0u(t,\cdot)\|_{L^q} \leq C\,(1+t)^{\delta_q}\,\|u_1\|_{L^1}, \qquad t\geq0,\\
&\delta_q=(n-1)\left(\frac1q-\frac12\right) - \frac1{\theta_0}\left(\frac{n-1}2 - \frac1q\right),
\end{split}\end{equation}
for some $C>0$.
\end{theorem}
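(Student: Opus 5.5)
The plan is to write the low-frequency part of the solution as an oscillatory integral and estimate it with a dyadic decomposition, treating the $L^\infty$ and $L^2$ endpoints first and then interpolating. In Fourier variables, problem~\eqref{eq:CPgenlin} gives
\[ \hat u(t,\xi)=\frac{e^{\lambda_+t}-e^{\lambda_-t}}{\lambda_+-\lambda_-}\,\hat u_1(\xi),\qquad \lambda_\pm=-\frac{b(\xi)}2\pm i\sqrt{\omega^2(\xi)-b(\xi)^2/4}. \]
Since $\theta_0>1$ and $b\le b_1\xii^{\theta_0}$, for $\xii\le 2$ small enough the square root equals $\omega(\xi)\big(1+O(\xii^{2\theta_0-2})\big)$. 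This is the damped-oscillation regime. Any bounded frequency region where the root degenerates is compact and away from $0$, so it contributes exponential decay and can be handled separately.

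Hence $P_0u=\sum_\pm K_\pm(t,\cdot)*u_1$ with
\[ K_\pm(t,x)=\int e^{ix\cdot\xi\pm it\tilde\omega(\xi)}\,e^{-tb(\xi)/2}\,\frac{\chi(\xi)}{\pm 2i\tilde\omega(\xi)}\,d\xi, \]
and it suffices to show $\|K_\pm(t,\cdot)\|_{L^q}\lesssim(1+t)^{\delta_q}$ by Young's inequality. I would split $\chi=\sum_{k\le 1}\psi(2^{-k}\xi)$ dyadically. On the piece at scale $\xii\sim 2^k$, two bounds are available.
\begin{itemize}
\item \emph{Dissipation.} The factor $e^{-c t2^{k\theta_0}}$, from~\eqref{eq:b-low}, gives exponential smallness once $2^k\gg t^{-1/\theta_0}$.
\item \emph{Dispersion.} Since $H_\omega$ has rank $n-1$, rescaling $\xi=2^k\eta$ and applying stationary phase (Littman type) gives $|K_k(t,x)|\lesssim 2^{k(n-1)}\,2^{k}\,(1+t2^k)^{-\frac{n-1}2}\,e^{-ct2^{k\theta_0}}$. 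Here the extra factor $2^{k}$ comes from $2^{kn}$ times the symbol size $2^{-k}$ of $1/\tilde\omega$. This step uses the $\mathcal C^{n+1}$ symbol bounds on $b$ and the perturbative control of $\tilde\omega-\omega$ to justify the integrations by parts.
\end{itemize}

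For the $L^\infty$ endpoint, I sum over $k$. The sum is dominated by the threshold $2^k\sim t^{-1/\theta_0}$, and gives
\[ 2^{k(n-1)}\,t^{-\frac{n-1}2}\,2^{-k\frac{n-1}2}=t^{-\frac{n-1}2}\,t^{-\frac{n-1}{2\theta_0}}. \]
This matches $\delta_\infty=-\frac{n-1}2-\frac1{\theta_0}\frac{n-1}2$. For $q=2$, Plancherel gives $\|K_k\|_{L^2}\lesssim 2^{k(n/2-1)}e^{-ct2^{k\theta_0}}$. This is summable for $n\ge3$ and gives $t^{-(n-2)/(2\theta_0)}$, in agreement with $\delta_2=-\frac1{\theta_0}\big(\frac{n-1}2-\frac12\big)$. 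For $n=2$ the sum diverges logarithmically, which explains the restriction $q>2$ there.

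Interpolating each dyadic piece between $L^2$ and $L^\infty$ before summing yields $\delta_q$ for $q\in[2,\infty]$. For $q<2$ I need an $L^1$-type bound on $K_k$. Integrating by parts in $\xi$ gives the spatial localization $|x|\lesssim t$ plus tails on the scale $2^{-k}$. Then $\|K_k\|_{L^1}\lesssim(t2^k)^{\frac{n-1}2}\cdot 2^{-k}$ at the threshold, i.e.\ the dispersive pointwise bound times the volume of a shell of radius $t$ and thickness $2^{-k}$. This gives $t^{\frac{n-1}2}t^{-\frac{n-3}{2\theta_0}}\cdots$, and one checks it equals $t^{\delta_1}$ with $\delta_1=\frac{n-1}2-\frac{n-3}{2\theta_0}$.

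The main obstacle is this $L^1$ (and generally $q<2$) estimate. It requires sharp localization of the kernel near the wave cone $x=\pm t\nabla\omega$, combined with summability over $k$. Summability forces $n\ge4$ at $q=1$ and $q>1$ for $n=3$. I would first try to prove the shell-localized bound by writing $K_k$ in rescaled variables, applying $n+1$ integrations by parts in the normal direction, and using the symbol estimates~\eqref{eq:b-low}. Afterwards I would interpolate between the $L^1$ and $L^2$ bounds at the level of each dyadic piece.
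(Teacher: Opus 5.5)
The paper does not prove this statement; it quotes it from \cite{D'Abbicco-2025} (Theorem 6). The $q$-range is the condition $d(1,q)=\frac{n+1}2-\frac1q>1$ of Theorem~\ref{thm:DAEp}, taken formally at $p=1$. Moreover, $d(1,q)-1=\frac{n-1}2-\frac1q$ is exactly the exponent of $2^k$ that, in your summation, makes the threshold $2^k\sim t^{-1/\theta_0}$ dominate, so your exponent bookkeeping is consistent. (Your displayed pointwise bound has a spurious factor $2^k$, but you then correctly use $2^{k(n-1)}$.) As a self-contained argument, however, the plan has one step that fails and one key estimate that is only assumed.

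\emph{The reduction to $K_\pm$ fails for $q\le n/(n-1)$.} For every fixed $t$, $\widehat{K_\pm}(t,\xi)\sim\pm 1/(2i\omega(\xi))$ as $\xi\to0$. This degree $-1$ singularity cancels only in the sum $\widehat{K_+}+\widehat{K_-}=\chi\,e^{-tb/2}\sin(t\tilde\omega)/\tilde\omega$. Hence each $K_\pm(t,\cdot)$ decays only like $|x|^{1-n}$ and is not in $L^q$ for $q\le n/(n-1)$. Dyadically, $\|K_{k,\pm}(t,\cdot)\|_{L^q}\approx 2^{k(n(1-1/q)-1)}$ for $2^k\le 1/t$, which does not sum as $k\to-\infty$. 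So ``it suffices to show $\|K_\pm(t,\cdot)\|_{L^q}\lesssim(1+t)^{\delta_q}$'' is false precisely for $q\in[1,\frac n{n-1}]$ when $n\ge4$ and for $q\in(1,\frac32]$ when $n=3$. That is exactly the range you call the main obstacle, and your per-piece $L^1$--$L^2$ interpolation inherits the divergence. The repair is to keep $\sin(t\tilde\omega)/\tilde\omega$ unsplit on $|\xi|\lesssim(1+t)^{-1}$. After the change of variables $\xi=\eta/t$ it is $t$ times a multiplier whose kernel is integrable uniformly in $t$. This region must be treated as one block, not dyadically, since each of its dyadic pieces already has $L^1$ norm $\sim t$. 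The block contributes $(1+t)^{1-n(1-\frac1q)}$, and $\delta_q-\big(1-n(1-\frac1q)\big)=\big(\frac{n-1}2-\frac1q\big)\big(1-\frac1{\theta_0}\big)\ge0$ in the stated range. Only for $|\xi|\gtrsim1/t$ may you split into $\pm$.

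\emph{The sharp $L^1$ bound is not established.} For $t^{-1}\le2^k\lesssim t^{-1/\theta_0}$ you need $\|K_k\|_{L^1}\lesssim(t2^k)^{\frac{n-1}2}2^{-k}$, i.e.\ localization to a $2^{-k}$-neighbourhood of the front $\{-t\nabla\omega\}$.
\begin{itemize}
\item Integrating by parts with the full phase gradient gains only $(2^kd)^{-1}+t2^k(2^kd)^{-2}$ at distance $d$ from the front. It therefore localizes only to width $(t2^{-k})^{1/2}$, and Cauchy--Schwarz on that thicker shell loses $(t2^k)^{1/4}$, a positive power of $t$ at the threshold.
\item Radial integrations by parts (the phase is linear along rays) give decay in $x\cdot\theta+t\omega(\theta)$ for each direction $\theta$. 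This is not integrable in $x$ until the angular oscillation is also exploited.
\end{itemize}
You need a genuine tool here. Options are an angular decomposition into $\sim(t2^k)^{(n-1)/2}$ sectors (Seeger--Sogge--Stein), or the asymptotics of the Fourier transform of the surface measure on $\{\omega=1\}$ followed by integration by parts in $\rho=\omega(\xi)$. Alternatively, use the classical frequency-localized bound $\|\mathscr F^{-1}[\psi(2^{-k}\cdot)e^{it\omega}]\|_{L^1}\lesssim(t2^k)^{(n-1)/2}$ (Peral and Miyachi for $\omega=|\xi|$). In each case you must also check, via~\eqref{eq:b-low}, that $e^{-tb/2}$, $e^{it(\tilde\omega-\omega)}$ and $2^k/\tilde\omega$ are order-zero symbols at scale $2^k$ when $t2^{k\theta_0}\lesssim1$. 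With these two repairs the scheme closes.
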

We stress that $\delta_q$ is decreasing with respect to $q$.

%
At high frequencies, we prove the following.
\begin{theorem}\label{thm:high}
Assume $\theta_1\in(1,2]$ and fix $q\in(1,\infty]$ such that
\[
n\left(1-\frac1q\right)<\theta_1.
\]
Then the solution to~\eqref{eq:CPgenlin} verifies the following high-frequencies $L^1-L^q$ estimate
\[ \|P_1u(t,\cdot)\|_{L^q} \leq C\,e^{-ct}\,\|u_1\|_{L^1}, \qquad t\geq0, \]
for some $C,c>0$. Moreover, if $\theta_1\in(1,2)$ and
\[
\theta_1\leq n\left(1-\frac1q\right)<2,
\]
the solution to~\eqref{eq:CPgenlin} verifies the following high-frequencies $L^1-L^q$ estimate
\[ \|P_1u(t,\cdot)\|_{L^q} \leq C_\delta\,t^{\delta-1}\,e^{-ct}\,\|u_1\|_{L^1}, \qquad t\geq0, \]
for some $c,\delta>0$ and $C_\delta>0$ (with $C_\delta\nearrow\infty$ as $\delta\searrow0$).
\end{theorem}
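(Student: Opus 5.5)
Writing $\hat u(t,\xi)=K(t,\xi)\hat u_1(\xi)$, the roots of $\lambda^2+b(\xi)\lambda+\omega^2(\xi)=0$ give
\[ K(t,\xi)=\frac{e^{\lambda_+t}-e^{\lambda_-t}}{\lambda_+-\lambda_-},\qquad \lambda_\pm=\frac{-b\pm\sqrt{b^2-4\omega^2}}{2}. \]
I will estimate $\|\mathscr F^{-1}((1-\chi)K(t,\cdot))\|_{L^q}$, since $\|P_1u\|_{L^q}\le \|\mathscr F^{-1}((1-\chi)K)\|_{L^q}\|u_1\|_{L^1}$ by Young's inequality. Because $\theta_1>1$ and $\omega\sim\xii$, we have $b^2\gg 4\omega^2$ for $\xii\ge R$ large: this is the overdamped regime. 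In the bounded region $1/2\le\xii\le R$ we have $\mathrm{Re}\,\lambda_\pm\le -c<0$, by positivity of $b$ and $\omega$. There $K$ is smooth in $\xi$, including at possible double roots, where $K=te^{-bt/2}$ through the entire function $\sinh(z)/z$. Hence all $\xi$-derivatives of $(\chi_R-\chi)K$ up to order $n+1$ are $O(e^{-ct})$, and the inverse transform is bounded by $Ce^{-ct}(1+|x|)^{-n-1}$, which lies in every $L^q$. This piece is harmless.

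For $\xii\ge R$ I will split $K=K_+-K_-$ with $K_\pm=\pm e^{\lambda_\pm t}/(\lambda_+-\lambda_-)$.
\begin{itemize}
\item In the fast mode $\lambda_-\sim-b\le -b_0\xii^{\theta_1}$, and $1/(\lambda_+-\lambda_-)\sim b^{-1}$. This is like a fractional heat kernel at time $t$ multiplied by $\xii^{-\theta_1}$; write $e^{\lambda_-t}=e^{-ct}e^{(\lambda_-+c)t}$.
\item In the slow mode, \eqref{eq:ODasymp} gives $\lambda_+=-2\omega^2/(b+\sqrt{b^2-4\omega^2})\sim-\omega^2/b\sim-\xii^{2-\theta_1}$. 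Since $\theta_1\le2$, this is bounded away from zero, so $|e^{\lambda_+t}|\le e^{-ct}$. It gives no smoothing in time, or only weak smoothing $e^{-t\xii^{2-\theta_1}}$ when $\theta_1<2$.
\end{itemize}
So the slow part behaves like $e^{-ct}\,m(\xi)$ with $m\approx\xii^{-\theta_1}e^{-ct\xii^{2-\theta_1}}$. Its symbol derivatives satisfy $|\partial^\alpha m|\lesssim\xii^{-\theta_1-|\alpha|}$, because $b$, $\omega$ obey \eqref{eq:b-high} and are homogeneous, so $\lambda_\pm$ inherit Mikhlin-type bounds.

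The key step is an $L^1\to L^q$ bound for multipliers of order $-\theta_1$ localized to $\xii\ge R$. By a Littlewood–Paley decomposition and scaling, the dyadic piece at $\xii\sim2^k$ has kernel with $L^q$ norm $\lesssim 2^{k(n(1-1/q)-\theta_1)}$, using $n+1$ derivatives to get integrability. Summing over $k\ge0$ converges exactly when $n(1-1/q)<\theta_1$. Combined with the factor $e^{-ct}$, this gives the first claim. For the fast mode the same dyadic bound holds uniformly in $t$, with the extra factor $e^{-c t 2^{k\theta_1}}$ only helping, so the first claim follows for both modes. (Equivalently, the kernel is a Bessel-type potential $G_{\theta_1}$, which lies in $L^q$ iff $n(1-1/q)<\theta_1$.)

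For the second claim, with $\theta_1\le n(1-1/q)<2$ and $\theta_1<2$, the dyadic sum diverges, and I use time decay in $\xii$ instead.
\begin{itemize}
\item For the slow mode, $e^{-ct2^{k(2-\theta_1)}}$ gives $\sum_k 2^{k(n(1-1/q)-\theta_1)}e^{-ct2^{k(2-\theta_1)}}\lesssim t^{-(n(1-1/q)-\theta_1+\epsilon)/(2-\theta_1)}$ for $t\le1$. The $\epsilon$ loss covers the borderline case $n(1-1/q)=\theta_1$, via the $\log$-type sum.
\item For the fast mode, $\sum_k 2^{k(n(1-1/q)-\theta_1)}e^{-ct2^{k\theta_1}}\lesssim t^{-(n(1-1/q)-\theta_1+\epsilon)/\theta_1}$, which is better since $\theta_1>2-\theta_1$.
\end{itemize}
Since $n(1-1/q)<2$, the slow exponent $(n(1-1/q)-\theta_1)/(2-\theta_1)$ is $<1$, so it can be written as $1-\delta$ with $\delta>0$. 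The $\epsilon$-dependence yields $C_\delta\to\infty$ as $\delta\to0$, and for $t\ge1$ the $e^{-ct}$ factor dominates.

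The main obstacle is verifying the symbol estimates for $\lambda_\pm$ and $1/(\lambda_+-\lambda_-)$, uniformly including the $t$-dependent exponentials, from only $n+1$ derivatives of $b$. This requires Faà di Bruno bounds showing $t^{|\beta|}|\partial^\beta\lambda_+|$-type factors are absorbed by $e^{-ct\xii^{2-\theta_1}}$.
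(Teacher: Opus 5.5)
Your argument is correct, but it takes a genuinely different route from the paper.

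\textbf{The paper's route.} The paper does not split $K$ into slow and fast modes, and it does not decompose dyadically. It bounds the whole symbol by $\xii^{-\theta_1}e^{-ct\xii^{2-\theta_1}}$ in the Mikhlin sense and then works in the real Hardy space $\mathscr H^1$:
\begin{itemize}
\item the Hardy--Littlewood--Sobolev inequality $\mathscr H^1\to L^q$ trades the $L^q$ norm for the weight $\xii^{n(1-1/q)}$;
\item the time factor is extracted through $e^{-ct\xii^{2-\theta_1}}\le C\xii^{-\rho}t^{-\rho/(2-\theta_1)}e^{-ct}$;
\item the Mikhlin--H\"ormander theorem on $\mathscr H^1$ is applied to the order-zero symbol $\xii^{n(1-1/q)+\varepsilon}K$, acting on the fixed function $\mathscr F^{-1}((1-\chi)\xii^{-\varepsilon})\in\mathscr H^1$.
\end{itemize}
Given the Hardy-space machinery this is shorter. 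It also needs only about $n/2+1$ derivatives of the symbol.

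\textbf{Your route.} Littlewood--Paley pieces, scaling and integration by parts give dyadic kernel bounds, which are then summed by the triangle inequality. What this buys:
\begin{itemize}
\item It is fully elementary and uses the $n+1$ derivatives granted by \eqref{eq:b-high}.
\item The threshold $n(1-1/q)<\theta_1$ appears transparently as the convergence of $\sum_k 2^{k(n(1-1/q)-\theta_1)}$.
\item In the second estimate you get the rate $t^{-(n(1-1/q)-\theta_1)/(2-\theta_1)}$, with an $\epsilon$-loss only at the endpoint $n(1-1/q)=\theta_1$. The paper always loses an $\varepsilon$ in the exponent, because it needs $(1-\chi)\xii^{-\varepsilon}$ to be the Fourier transform of an $\mathscr H^1$ function.
\item You treat the intermediate frequencies explicitly, where $b^2-4\omega^2$ may vanish or be negative, via the entire function $\sinh z/z$. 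The paper simply assumes $D>0$ on the support of $1-\chi$ ``with no loss of generality''.
\end{itemize}

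\textbf{Minor slips.} Neither affects the argument. With $K_\pm=\pm e^{\lambda_\pm t}/(\lambda_+-\lambda_-)$ one has $K=K_++K_-$, not $K_+-K_-$. Also, $b$ is not assumed homogeneous; the symbol bounds for $\lambda_\pm$ and $1/(\lambda_+-\lambda_-)$ follow from \eqref{eq:b-high} together with the homogeneity of $\omega$.
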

We postpone the proof of Theorem~\ref{thm:high} to~\textsection\ref{sec:proofhigh}.

To find the existence exponent $q_c$, we impose that $\delta_{q_c}$ in~\eqref{eq:deltaq} verifies the equation $q_c\delta_{q_c}=-1$:\footnote{If $n=2$, we notice that
\[ q_c = 2+\frac{2\theta_0}{\theta_0+1}>2. \]}
\[ (n-1)\left(1-\frac{q_c}2\right) - \frac1{\theta_0}\left(q_c\frac{n-1}2 - 1\right) =-1  \iff q_c\frac{n-1}2\left(1+\frac1{\theta_0}\right) = n+\frac1{\theta_0}, \]
that is,
\begin{equation}\label{eq:qc}
q_c = \frac2{\theta_0+1}\,\frac{n\theta_0+1}{n-1}.
\end{equation}
First, we only assume small initial data in $L^1$.
\begin{theorem}\label{thm:main}
Let $\theta_0>1$ and $\theta_1\in(1,2]$, and assume~\eqref{eq:f} with $q_c-1<\alpha<\theta_1/(n-\theta_1)$ ($q_c-1<\alpha<\infty$ if $n=\theta_1=2$).\footnote{The interval $(q_c-1,\theta_1/(n-\theta_1))$ may be empty; in this case, Theorem~\ref{thm:main} is not applicable, though we may rely on Theorem~\ref{thm:inf}.} Fix $\bar q>1+\alpha$, and $\bar q<n/(n-\theta_1)$ if $n\geq3$. Then there exists $\varepsilon>0$ such that if
\[ u_1\in L^1, \quad \|u_1\|_{L^1}\leq \varepsilon, \]
there is a unique solution $u\in L^\infty_\lloc([0,\infty),L^1\cap L^{\bar q})$. Moreover, $u(t,\cdot)$ satisfies the same estimate as in~\eqref{eq:deltaq}, that is,
\[ \|u(t,\cdot)\|_{L^q}\leq C\,(1+t)^{\delta_q}\,\|u_1\|_{L^1},\]
for any:
\begin{itemize}
\item $q\in(2,\bar q]$ if $n=2$;
\item $q\in(1,\bar q]$ if $n=3$;
\item $q\in[1,\bar q]$ if $n\geq4$.
\end{itemize}
\end{theorem}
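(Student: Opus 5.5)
We argue by a contraction (Banach fixed point) on the Duhamel formulation. Let $E(t)$ denote the solution operator of~\eqref{eq:CPgenlin}, $u_1\mapsto u(t,\cdot)$; then solutions to~\eqref{eq:CPgen} are fixed points of
\[ Nu(t) = E(t)u_1 + \int_0^t E(t-s)f(u(s,\cdot))\,ds. \]
We work in the space $X$ of $u\in L^\infty_\lloc([0,\infty),L^1\cap L^{\bar q})$ with norm
\[ \|u\|_X=\sup_{t\geq0}\Big( (1+t)^{-\delta_{q_0}}\|u(t)\|_{L^{q_0}} + (1+t)^{-\delta_{\bar q}}\|u(t)\|_{L^{\bar q}}\Big), \]
where $q_0$ is the smallest admissible exponent ($q_0=1$ for $n\geq4$, and $q_0$ slightly above $1$ or $2$ for $n=3$ or $n=2$). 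The intermediate estimates then follow by interpolation, using that $\delta_q$ is affine in $1/q$. For $n\le 3$ we add the $L^1$ norm with a growth weight, $\|u(t)\|_{L^1}\lesssim 1+t$, which holds since the low-frequency $L^1$ bound is finite and the high-frequency part decays. The linear part is controlled by splitting $E(t)=P_0E(t)+P_1E(t)$. Theorem~\ref{thm:DAE} handles $P_0$. For $P_1$ we use Theorem~\ref{thm:high}, with $q\le\bar q<n/(n-\theta_1)$, i.e.\ $n(1-1/q)<\theta_1$, so the high-frequency part decays exponentially; $(1+t)^{\delta_q}$ is then an upper bound for all $t$. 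Hence $\|E(\cdot)u_1\|_X\le C\|u_1\|_{L^1}$.

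For the nonlinear term we apply the same $L^1$--$L^q$ estimates to $E(t-s)f(u(s))$:
\[ \Big\|\int_0^tE(t-s)f(u(s))ds\Big\|_{L^q}\le C\int_0^t(1+t-s)^{\delta_q}\|u(s)\|_{L^{1+\alpha}}^{1+\alpha}\,ds, \]
using $\|f(u)\|_{L^1}\le C\|u\|_{L^{1+\alpha}}^{1+\alpha}$ from~\eqref{eq:f}. Since $1+\alpha\in(1,\bar q)$, and (for $n=2$) $1+\alpha>q_c>2$, interpolation gives $\|u(s)\|_{L^{1+\alpha}}\le (1+s)^{\delta_{1+\alpha}}\|u\|_X$. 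The source is thus bounded by $(1+s)^{(1+\alpha)\delta_{1+\alpha}}\|u\|_X^{1+\alpha}$.

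The key point is that $1+\alpha>q_c$ together with the monotonicity of $q\mapsto q\delta_q$ gives $(1+\alpha)\delta_{1+\alpha}<-1$. Indeed, $q\delta_q=(n-1)(1-q/2)-\frac1{\theta_0}(q\frac{n-1}2-1)$ is strictly decreasing in $q$ and equals $-1$ at $q_c$. Hence the source is integrable in time. Splitting $\int_0^{t/2}+\int_{t/2}^t$, the first piece gives $(1+t)^{\delta_q}$. On the second, $(1+s)\sim(1+t)$, so we get $(1+t)^{(1+\alpha)\delta_{1+\alpha}}\int_0^{t/2}(1+\tau)^{\delta_q}d\tau$. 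This is bounded by $(1+t)^{\delta_q}$, because $(1+\alpha)\delta_{1+\alpha}<-1$ absorbs the extra $(1+t)$ factor (or a logarithm if $\delta_q\geq-1$). Some care is needed when $\delta_q<-1$, but then $\int(1+\tau)^{\delta_q}$ is bounded and $(1+\alpha)\delta_{1+\alpha}\le\delta_q$ may fail. In that case we instead cap $q$ in the norm, noting that for $q\le\bar q$ we need $(1+\alpha)\delta_{1+\alpha}\le\delta_{\bar q}$. This is the step I expect to be the main obstacle: checking this exponent comparison, and if it fails, possibly choosing $\bar q$ close to $1+\alpha$ or adjusting the weight. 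The Lipschitz estimate
\[ \|Nu-Nv\|_X\le C\|u-v\|_X(\|u\|_X^\alpha+\|v\|_X^\alpha) \]
follows identically, via Hölder's inequality $\|f(u)-f(v)\|_{L^1}\le C\|u-v\|_{L^{1+\alpha}}(\|u\|^\alpha_{L^{1+\alpha}}+\|v\|^\alpha_{L^{1+\alpha}})$.

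With these two estimates, $N$ maps a small ball of $X$ of radius $2C\varepsilon$ into itself and is a contraction for $\varepsilon$ small. This gives existence, uniqueness and the stated decay. Local-in-time $L^1\cap L^{\bar q}$ membership follows from the same bounds.
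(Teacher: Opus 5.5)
Your architecture matches the paper's: a contraction in a space weighted by $(1+t)^{-\delta_q}$ at the lower endpoint exponent and at $q=\bar q$. For the lower endpoint this means $q=1$ when $n\geq4$, and $q_0$ together with a polynomially growing $L^1$ weight when $n=2,3$. The other ingredients are also the paper's: interpolation to control $\|u(s)\|_{L^{1+\alpha}}$, the $P_0/P_1$ splitting via Theorems~\ref{thm:DAE} and~\ref{thm:high}, and the equivalence $(1+\alpha)\delta_{1+\alpha}<-1\iff 1+\alpha>q_c$.

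The gap is the step you flag yourself. You leave the case $\delta_q<-1$ open and propose to handle it by capping $q$, taking $\bar q$ close to $1+\alpha$, or changing the weights. None of these proves the statement as written. That statement allows every $\bar q\in(1+\alpha,n/(n-\theta_1))$ and asserts the decay $(1+t)^{\delta_q}$ for every $q\le\bar q$.

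The missing observation is that this case never occurs. You used the restriction $\bar q<n/(n-\theta_1)$ only to get exponential decay of $P_1$, but it also bounds $\delta_q$ from below. Indeed, $\delta_q$ is affine and increasing in $1/q$, with slope $n-1+1/\theta_0$. So $1/q\geq1/\bar q>1-\theta_1/n$ gives
\[ \delta_q>\Big(\frac{n-3}2+\frac{\theta_1}n\Big)\Big(1-\frac1{\theta_0}\Big)+1-\theta_1>1-\theta_1\geq-1. \]
For $n=2$ one has directly $\delta_q\geq\delta_\infty=-\frac12\big(1+\frac1{\theta_0}\big)>-1$ for every $q$. This is Remark~\ref{rem:-1}, which the paper invokes at exactly this point of the proof.

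Hence $(1+\alpha)\delta_{1+\alpha}<-1<\delta_q$ for all admissible $q$, and the exponent comparison you worried about holds automatically. On $[0,t/2]$, integrability of the source gives $C(1+t)^{\delta_q}$. On $[t/2,t]$, the integral is bounded by $C(1+t)^{(1+\alpha)\delta_{1+\alpha}+1+\delta_q}\leq C(1+t)^{\delta_q}$.

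This also explains the hypothesis $\alpha<\theta_1/(n-\theta_1)$: it is exactly what makes the admissible interval for $\bar q$ nonempty. With this observation inserted, your argument closes and coincides with the paper's.
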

We postpone the proof of Theorem~\ref{thm:main} to~\textsection\ref{sec:proofmain}.

The restriction from above $q_c<1+\theta_1/(n-\theta_1)$ is related to the assumption that the initial data is only small in $L^1$, and it provides a technical bound from above on the space dimension~$n$, in view of the fact that $q_c\sim 2\theta_0/(\theta_0+1)>1$ as $n\to\infty$.
\begin{example}\label{ex:2}
When $\theta_0=2$, we may apply Theorem~\ref{thm:main} with $n=2$, $q_c=10/3$ and the estimate
%
\[ \|u(t,\cdot)\|_{L^q}\leq C\,(1+t)^{-\frac32\left(\frac12-\frac1q\right)}\,\|u_1\|_{L^1},\]
holds for any $q\in(2,\infty)$.
\end{example}
\begin{remark}\label{rem:-1}
As a consequence of $\frac1q>1-\frac{\theta_1}n$, we find
\[ \begin{split}
\delta_q & > (n-1)\left(1-\frac{\theta_1}n-\frac12\right) - \frac1{\theta_0}\left(\frac{n-1}2 - 1 + \frac{\theta_1}n\right) \\
    & = \left(\frac{n-3}2+\frac{\theta_1}n\right)\left(1-\frac1{\theta_0}\right)+1-\theta_1>-(\theta_1-1).
\end{split}\]
In the forthcoming Theorem~\ref{thm:inf}, it is of interest to check whether $\delta_q>-1$. We find
\[ \delta_q >-1 \iff \left(n-1+\frac1{\theta_0}\right)\frac1q>\frac{n-1}2\left(1+\frac1{\theta_0}\right)-1. \]
The above condition holds for any $q\leq\infty$ when $n=2$, and for
\[ q<q^\dagger(n,\theta_0) = \frac{n-1+\frac1{\theta_0}}{\frac{n-1}2\left(1+\frac1{\theta_0}\right)-1},\qquad n\geq3. \]
Clearly, $q_c<q^\dagger$, since $q_c$ is the value for which the $\delta_{q_c}=-1/q_c$.
\end{remark}
Following as in~\cite{DA14proc}, thanks to the overdamping at high frequencies, we may easily construct global-in-time solutions in $u\in L^\infty_\lloc([0,\infty),L^1\cap L^\infty)$, assuming small initial data in $L^1\cap L^p$, with $p>n/(2\theta_1)$. This allows us to remove the upper bound for $\alpha$ and, moreover, we may assume the weaker condition~\eqref{eq:fsmall} on the nonlinearity, in place of~\eqref{eq:f}.
\begin{theorem}\label{thm:inf}
Let $\theta_0>1$ and $\theta_1\in(1,2]$, and assume~\eqref{eq:fsmall} with $\alpha>q_c-1$. Then there exists $\varepsilon>0$ such that if
\[ u_1\in L^1\cap L^p, \quad \|u_1\|_{L^1}+\|u_1\|_{L^p}\leq \varepsilon, \quad \text{for some $p>n/(2\theta_1)$,} \]
there is a unique solution $u\in L^\infty_\lloc([0,\infty),L^1\cap L^\infty)$. Moreover, it satisfies the following estimate:
\[ \|u(t,\cdot)\|_{L^q}\leq C\,(1+t)^{\delta_q}\,\big(\|u_1\|_{L^1}+\|u_1\|_{L^\infty}\big),\]
for any:
\begin{itemize}
\item $q>2$ if $n=2$;
\item $q\in(1,q^\dagger)$ if $n=3$;
\item $q\in[1,q^\dagger)$ if $n\geq4$.
\end{itemize}
\end{theorem}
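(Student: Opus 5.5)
We argue by a contraction argument in the Banach space
\[ X=\Big\{u\in L^\infty_\lloc([0,\infty),L^1\cap L^\infty):\ \|u\|_X=\sup_{t\geq0}\big((1+t)^{-\delta_1}\|u(t)\|_{L^1}+(1+t)^{-\delta_{\tilde q}}\|u(t)\|_{L^{\infty}}+(1+t)^{-\delta_{\hat q}}\|u(t)\|_{L^{\hat q}}\big)<\infty\Big\}, \]
with norms weighted by the decay rates of Theorem~\ref{thm:DAE} (for $n=2,3$ the $L^1$ weight is replaced by one at an admissible $q_0$ near $2^+$, resp.\ $1^+$). Write $u=u^{\mathrm{lin}}+Nu$, where $u^{\mathrm{lin}}=K(t)u_1$ solves~\eqref{eq:CPgenlin} and $Nu(t)=\int_0^tK(t-s)f(u(s))\,ds$ by Duhamel's principle. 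Since $\|u\|_{L^\infty}$ will be small, assumption~\eqref{eq:fsmall} acts as~\eqref{eq:f}; we may modify $f$ outside a neighbourhood of $0$ so that~\eqref{eq:f} holds globally, and this does not change small solutions.

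\textbf{Linear estimates.} The low-frequency part is handled by Theorem~\ref{thm:DAE}: $\|P_0K(t)g\|_{L^q}\leq C(1+t)^{\delta_q}\|g\|_{L^1}$. For the high-frequency part we need an $L^1\cap L^p\to L^\infty$ bound with exponential decay. The multiplier $(1-\chi)\widehat K$ behaves like $e^{-ct}\xii^{-\theta_1}$ plus an exponentially decaying smoothing part, as in the proof of Theorem~\ref{thm:high} (overdamping). Hence $P_1K(t)$ gains $\theta_1$ derivatives, and iterating (or using $p>n/(2\theta_1)$ through a Sobolev-type embedding of Bessel potentials) gives
\[ \|P_1K(t)g\|_{L^\infty}\leq Ce^{-ct}\big(\|g\|_{L^1}+\|g\|_{L^p}\big). \]
I expect this step to be the main obstacle. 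The condition $p>n/(2\theta_1)$ suggests a gain of $2\theta_1$ derivatives, obtained from the structure $\widehat K\approx e^{-\omega^2 t/b}/b$ minus $e^{-bt}/b$ at high frequency; the second term must be treated via parabolic smoothing for $t>0$. The difficulty is the singular behaviour near $t=0$. For the nonlinear term, however, $L^1\cap L^\infty$ data are available since $f(u)\in L^1\cap L^\infty$, so only an integrable-in-$s$ singularity $(t-s)^{\delta-1}$, as in Theorem~\ref{thm:high}, is needed. For $u^{\mathrm{lin}}$ the data are only in $L^1\cap L^p$; I would take $\|u\|_{L^\infty}$ small after an initial time layer, or interpolate using the $L^p$ norm.

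\textbf{Nonlinear estimates.} Combining the above, for $q$ in the stated ranges,
\[ \|Nu(t)\|_{L^q}\leq C\int_0^t(1+t-s)^{\delta_q}\|f(u(s))\|_{L^1\cap L^\infty}\,ds. \]
We have $\|f(u)\|_{L^1}\leq C\|u\|_{L^{1+\alpha}}^{1+\alpha}\leq C(1+s)^{(1+\alpha)\delta_{1+\alpha}}\|u\|_X^{1+\alpha}$ by interpolation, and $(1+\alpha)\delta_{1+\alpha}<-1$ precisely because $1+\alpha>q_c$ and $q\delta_q$ is decreasing. Similarly $\|f(u)\|_{L^\infty}\leq\|u\|_{L^\infty}^{1+\alpha}$ decays at least as fast. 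We then split $\int_0^{t/2}+\int_{t/2}^t$. On $[0,t/2]$ we get $(1+t)^{\delta_q}$ times an integrable function. On $[t/2,t]$ we get $(1+t)^{(1+\alpha)\delta_{1+\alpha}}\int_0^{t/2}(1+\tau)^{\delta_q}\,d\tau$, which is bounded by $C(1+t)^{\delta_q}$ when $\delta_q>-1$; this is why $q<q^\dagger$ (Remark~\ref{rem:-1}) appears. If $\delta_q\leq-1$ the same bound holds anyway because $(1+\alpha)\delta_{1+\alpha}<-1$, but we restrict to $q<q^\dagger$ as in the statement. For the $L^\infty$ control we use any $\tilde q<q^\dagger$ admissible, interpolating $L^\infty$ between the $L^{\tilde q}$ decay and the high-frequency exponentially decaying piece, which yields boundedness and smallness sufficient for the nonlinearity.

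\textbf{Contraction and conclusion.} The Lipschitz bound~\eqref{eq:f}, together with Hölder's inequality, gives
\[ \|Nu-Nv\|_X\leq C\|u-v\|_X\big(\|u\|_X^\alpha+\|v\|_X^\alpha\big), \]
and also $\|Nu\|_X\leq C\|u\|_X^{1+\alpha}$, while $\|u^{\mathrm{lin}}\|_X\leq C\varepsilon$. For $\varepsilon$ small, the Banach fixed point theorem on a small ball of $X$ yields the unique global solution. Its norm in $X$ then gives the stated decay for all $q$ in the listed ranges, by interpolation between the weighted norms.
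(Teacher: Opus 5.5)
There is a genuine gap, and it is the step you flagged as the main obstacle. You never prove the high-frequency bound $\|P_1K(t)g\|_{L^\infty}\le Ce^{-ct}(\|g\|_{L^1}+\|g\|_{L^p})$ for the linear part, and the mechanisms you suggest cannot give it. The paper gets it in one line. The proof of Theorem~\ref{thm:high} bounds the kernel itself: $\|\mathscr{F}^{-1}K(t,\cdot)\|_{L^q}\le Ce^{-ct}$ whenever $n(1-1/q)<\theta_1$ (see \eqref{eq:ODest}). Young's inequality with $q=p'$ (the paper calls this duality) then gives \eqref{eq:dual}, and adding the case $q=1$ gives \eqref{eq:dualgen}. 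With this, $u^{\mathrm{lin}}$ has finite $X$-norm and your scheme closes.

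Only a gain of $\theta_1$ derivatives is used here, and the hypothesis at $q=p'$ is $n/p<\theta_1$, i.e.\ $p>n/\theta_1$. A gain of $2\theta_1$ derivatives is not available uniformly in $t$. At high frequency $\frac{b-D}2=\frac{2\omega^2}{b+D}$ is comparable to $\xii^{2-\theta_1}$, so $e^{-\frac{b-D}2t}$ smooths only with constants like $t^{-\rho/(2-\theta_1)}$. For $\theta_1=2$ it does not smooth at all, since the leading part $e^{-\frac{b-D}2t}/D$ of $K$ is a symbol of exact order $-2$.

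Concretely, take $A=B=-\Delta$ and $n=4$, so that every $p\in(1,2]$ is allowed by the statement. Let $u_1\in L^1\cap L^2$ equal $|x|^{-2}(\log\frac1{|x|})^{-a}$, $a\in(\frac12,1]$, near the origin. Then $u^{\mathrm{lin}}(t)=e^{-t}\,\mathscr{F}^{-1}\big((1-\chi)\xii^{-2}\big)\ast u_1$ plus a function bounded for each fixed $t>0$. The first term is unbounded at $x=0$ for every $t>0$, while the Duhamel term of any $u\in L^\infty_\lloc([0,\infty),L^1\cap L^\infty)$ is bounded.

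So neither an ``initial time layer'' nor interpolation with the $L^p$ norm can rescue the argument. You need $\|u(t)\|_{L^\infty}$ finite and small for all $t\ge0$, both for membership in the solution space and to invoke \eqref{eq:fsmall} or your truncation of $f$. The threshold $n/(2\theta_1)$ in the statement should be read as $n/\theta_1$, which is what the paper's duality step at $q=p'$ actually requires.

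The rest of your argument is sound and close to the paper's. For the Duhamel term you use a legitimate variant. Since $f(u)\in L^1\cap L^\infty$, Theorem~\ref{thm:high} at $q=r'$ close to $1$ plus Young's inequality gives an $L^r\to L^\infty$ bound ($r>n/\theta_1$) with no time singularity at all. The paper instead estimates $\|f(u)-f(v)\|_{L^p}$ through $L^{p(1+\alpha)}$ norms in \eqref{eq:Holder2}. Your truncation of $f$ is a fine substitute for the paper's direct use of the smallness of $\|u\|_{L^\infty}$.

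Two minor points remain. For $n=2,3$ you should keep an $L^1$ term with a growing weight, as in \eqref{eq:normlow}, rather than replacing it; otherwise you lose $u\in L^\infty_\lloc([0,\infty),L^1)$. Your aside that the Duhamel bound survives for $\delta_q\le-1$ is wrong in general: for $\delta_q<-1$ the integral behaves like $(1+t)^{\max\{\delta_q,(1+\alpha)\delta_{1+\alpha}\}}$. You do not rely on it, though, since you weight the $L^\infty$ norm by $\delta_{\tilde q}$ with $\tilde q<q^\dagger$.
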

We postpone the proof of Theorem~\ref{thm:inf} to~\textsection\ref{sec:proofinf}.
\begin{remark}
Thanks to the fact that $u_1$ is small in $L^2$, by minor modifications, the solution space in Theorem~\ref{thm:inf} may be extended to include energy solutions, namely, we may find solutions in $L^\infty_\lloc([0,\infty),L^1\cap L^\infty)\cap \mathcal C([0,\infty),H^{\theta_1})\cap \mathcal C^1([0,\infty),L^2)$, also verifying a suitable energy estimate for
\[ \frac12\|u(t,\cdot)\|_{\dot H^{\theta_1}}^2+\frac12\|u(t,\cdot)\|_{L^2}^2. \]
\end{remark}
\begin{example}\label{ex:3}
When $\theta_0=2$, we may apply Theorem~\ref{thm:main} with $n=3$, $q_c=7/3$ and the estimate
%
\[ \|u(t,\cdot)\|_{L^q}\leq C\,(1+t)^{\frac5{2q}-\frac32}\,\big(\|u_1\|_{L^1}+\|u_1\|_{L^\infty}\big),\]
holds for any $q\in(1,\infty]$. For $A=B=-\Delta$, the blow-up in finite time for $q\leq 7/3$ has been recently proved by W. Chen~\cite{ChenPreprint} (the global existence of small data solution is also independently proved using the estimates in~\cite[Theorem 6]{D'Abbicco-2025}).
\end{example}
\begin{remark}\label{rem:Strauss}
After straightforward computation, we find that $q_c\leq \gamma(n-1)$ if, and only if,
\[ n\leq \bar n(\theta_0)=\frac{\theta_0^2+\theta_0+2}{\theta_0(\theta_0-1)} \]
or, equivalently,
\[ \theta_0\leq \bar\theta(n)= \frac{n+1 + \sqrt{n^2+10n-7}}{2(n-1)}. \]
We notice that the interval for $n$ is empty if
\[ \frac{\theta_0^2+\theta_0+2}{\theta_0(\theta_0-1)}<2 \iff \theta_0^2 -3\theta_0 -2>0 \iff \theta_0> \frac{3+\sqrt{17}}2, \]
while the bound $\bar n(\theta_0)\to\infty$ as $\theta_0\searrow1$.
\end{remark}


\section{Proof of Theorem~\ref{thm:high}}\label{sec:proofhigh}

Here we prove Theorem~\ref{thm:high}.
\begin{proof}[Proof of Theorem~\ref{thm:high}]
We may assume with no loss of generality that
\[ \mathscr{F}u(t,\xi)=K(t,\xi)\,\mathscr{F}u_1(\xi),\]
with
\begin{equation}\label{eq:Fourep}
K=\frac{e^{-\frac{b-D}2}-e^{-\frac{b+D}2}}{D}\,(1-\chi)\,,\qquad D=\sqrt{b(\xi)^2-4\omega^2(\xi)}>0,
\end{equation}
since this is true for sufficiently large $\xii$, due to~$\theta_1>0$ in~\eqref{eq:b-high} (see~\eqref{eq:OD}). By the asymptotics (see~\eqref{eq:ODasymp}):
\begin{equation}\label{eq:Fouasymp}
-\frac{b-D}2\sim -\frac{\omega^2}{b}, \qquad -\frac{b-D}2\sim -b,\qquad D\sim b,
\end{equation}
we obtain that~$\mathscr{F}^{-1}K(t,\cdot)\in L^q$, with
\begin{equation}\label{eq:ODest}
\|\mathscr{F}^{-1}K(t,\cdot)\|_{L^q}\leq C\,t^{-\frac\rho{2-\theta_1}}\,e^{-ct},
\end{equation}
for $\rho\geq0$, provided that
\[ n\left(1-\frac1q\right) < \theta_1 + \rho \]
In the case $\theta_1=1$, we put $\rho=0$. As $\rho\nearrow 2-\theta_1$, we find the desired condition.

Estimate~\eqref{eq:ODest} may be proved following as in~\cite{DAE14JDE,DAE14NA,DAE17NA}. 

However, we propose an alternative, shorter proof, based on the use of the real Hardy space $\mathscr H^1(\R^n)$ as a tool (see~\cite{FS72}).

Thanks to the Hardy Littlewood Sobolev inequality from $\mathscr H^1(\R^n)$ to $L^q(\R^n)$ (see~\cite[Theorem G]{SW}), we obtain
\[ \|\mathscr{F}^{-1}K(t,\cdot)\|_{L^q(\R^n)} \leq C\|\mathscr{F}^{-1}(\xii^{n\left(1-\frac1q\right)}\,K(t,\cdot))\|_{\mathscr H^1(\R^n)}.\]
For a given $\varepsilon>0$ such that
\[ n\left(1-\frac1q\right) + \varepsilon \leq \theta_1 + \rho , \]
with $\rho\geq0$ if $\theta_1\in(1,2)$ and $\rho=0$ if $\theta_1=2$, we now apply Mikhlin-H\"ormander theorem in the real Hardy space $\mathscr H^1(\R^n)$ (see~\cite[Theorem E]{Miyachi-1981}; see also~\cite[Theorems 4.6 and 4.7]{CALDERON1977101} and~\cite[Theorems 1 and 2]{Miyachi-1980}) to obtain that
\begin{equation}\label{eq:MH}
\|\mathscr{F}^{-1}(\xii^{n\left(1-\frac1q\right)}\,K(t,\cdot))\|_{\mathscr H^1(\R^n)} \leq C\,e^{-ct}\,t^{\delta-1}\,\|\mathscr{F}^{-1}((1-\chi)\xii^{-\varepsilon})\|_{\mathscr H^1(\R^n)},
\end{equation}
where the power $t^{-\frac{\rho}{2-\theta_1}}$ is omitted if $\theta_1=2$. To obtain~\eqref{eq:MH}, it is sufficient to notice that:
\[ \xii^{|\alpha|}\,|\partial_\xi^\alpha (\xii^{n\left(1-\frac1q\right)+\varepsilon} K)|\leq C\xii^{n\left(1-\frac1q\right)+\varepsilon-\theta_1}\, e^{-ct\xii^{2-\theta_1}}, \qquad |\alpha|\leq1+\frac{n}2, \]
for sufficiently large $\xii$ (namely, in the support of $1-\chi$). The above estimate follows from~\eqref{eq:Fourep}-\eqref{eq:Fouasymp} and from condition~\eqref{eq:b-high}.

If $\theta_1=2$ or if we want to take $\delta=1$ when $\theta_1\in(1,2)$, it is sufficient to notice that
\[ n\left(1-\frac1q\right)+\varepsilon-\theta_1\leq0 \]
for sufficiently small $\varepsilon>0$ when $n(1-1/q)\leq\theta_1$, and to use $e^{-ct\xii^{2-\theta_1}}\leq e^{-ct}$. When $\theta_1<2$, for any $\rho>0$, we may estimate
\[ e^{-ct\xii^{2-\theta_1}} \leq C\xii^{-\rho}\,t^{-\frac\rho{2-\theta_1}}\,\,e^{-ct}. \]
It is sufficient to notice that
\[ n\left(1-\frac1q\right)+\varepsilon-\rho-\theta_1\leq0 \]
holds for $n(1-1/q)<2$, letting $\rho\nearrow 2-\theta_1$ and $\varepsilon\searrow0$ (we set $\delta-1=-\frac\rho{2-\theta_1}$).

Therefore, we proved~\eqref{eq:MH}. The proof follows from the fact that $\mathscr{F}^{-1}((1-\chi)\xii^{-\varepsilon})\in\mathscr H^1(\R^n)$ (see, for instance, Corollary~1 in~\cite{DA26}).
\end{proof}


\section{Proof of Theorem~\ref{thm:main}}\label{sec:proofmain}

Here we prove Theorem~\ref{thm:main}.
\begin{proof}[Proof of Theorem~\ref{thm:main}]
We use a standard contraction argument for small data solutions.

Let $n\geq4$. For any~$T>0$, we introduce the solution space
\[ X(T)= L^\infty([0,T], L^1\cap L^{\bar q}), \]
equipped with norm
\begin{equation}\label{eq:norm}
\|u\|_{X(T)} = \sup_{t\in[0,T]} \Big( (1+t)^{-\delta_1}\|u(t,\cdot)\|_{L^1}+(1+t)^{-\delta_{\bar q}}\|u(t,\cdot)\|_{L^{\bar q}} \Big).
\end{equation}
In particular, by interpolation ($1+\alpha\in[1,\bar q]$), any function~$u$ in $X(T)$ verifies the decay estimate
\begin{equation}\label{eq:decay}
\|u(t,\cdot)\|_{L^{1+\alpha}} \leq (1+t)^{\delta_{1+\alpha}}\,\|u\|_{X(T)},\quad \forall t\in[0,T].
\end{equation}
A function $u\in X(T)$ is a solution to~\eqref{eq:CPgen} in~$X(T)$ if, and only if, it satisfies the equality
\begin{equation}\label{eq:fixedpoint}
u(t,\cdot) =  E(t)\ast u_1 + Fu(t,\cdot)\,, \qquad \text{in~$X(T)$,}
\end{equation}
where $E(t)\ast u_1$ denotes the solution to~\eqref{eq:CPgenlin}, and~$F$ is the operator such that, for any~$u\in X$,
\begin{equation}\label{eq:F0}
Fu(t,x) = \int_0^t E(t-s)\ast f(u(s,\cdot))\, ds\,.
\end{equation}
We claim that
\begin{align}
\label{eq:ubasic}
\|E(t)\ast u_1\|_{X(T)}
    & \leq C_1\,\|u_1\|_{\mathcal A},\\
\label{eq:contraction}
\|Fu-Fv\|_{X(T)}
    & \leq C_2\|u-v\|_{X} \bigl(\|u\|_{X(T)}^{\alpha}+\|v\|_{X(T)}^{\alpha}\bigr)\,,
\end{align}
with~$C_1$ and $C_2$ independent of~$T$. In particular, for $v=0$, in view of $f(0)=0$, estimate~\eqref{eq:contraction} reduces to
\begin{equation}\label{eq:well}
\|Fu\|_{X(T)} \leq C\|u\|_{X(T)}^{1+\alpha}\,.
\end{equation}
We now define
\[ R=2C_1\,\|u_1\|_{\mathcal A}. \]
For sufficiently small data, $2C_2R^{\alpha-1}\leq 1/2$. Then, by \eqref{eq:ubasic} and~\eqref{eq:well} it follows that the operator $E(t)\ast u_1 + F$ maps the ball $B_R=\{u: \ \|u\|_{X(T)}\leq R\}$ in itself. Due to~\eqref{eq:contraction}, it is a contraction. Therefore, there is a unique fixed point for $u^{\mathrm{lin}}(t,x) + F$ in $B_R$, that is, a unique solution to~\eqref{eq:fixedpoint}. Moreover, $\|u\|_{X(T)}\leq 2C_1\,\|u_1\|_{\mathcal A}$. Due to the fact that the constants do not depend on $T$, the result is global-in-time.

The proof of~\eqref{eq:ubasic} immediately follows by applying Theorems~\ref{thm:DAE} and~\ref{thm:high}, for $q=1,\bar q$. We shall now prove~\eqref{eq:contraction}.

By H\"older inequality, using~\eqref{eq:f} and~\eqref{eq:decay}, we find that
\begin{equation}\label{eq:Holder}\begin{split}
& \|(f(u)-f(v))(s,\cdot)\|_{L^1}\\
    & \qquad \leq C_1\,\|(u-v)(|u|^{\alpha}+|v|^{\alpha})(s,\cdot)\|_{L^1} \\
    & \qquad \leq C_1\,\|(u-v)(s,\cdot)\|_{L^{1+\alpha}}\,\big(\|u(s,\cdot)\|_{L^{1+\alpha}}^{\alpha}+\|v(s,\cdot)\|_{L^{1+\alpha}}^{\alpha}\big)\\
    & \qquad \leq C_2\,(1+s)^{(1+\alpha)\delta_{1+\alpha}}\,\|u-v\|_{X(T)}\,\big(\|u\|_{X(T)}^{\alpha}+\|v\|_{X(T)}^{\alpha}\big).
\end{split}\end{equation}
Therefore, applying Theorems~\ref{thm:DAE} and~\ref{thm:high}, by using~\eqref{eq:Holder}, for $q=1,\bar q$, we may estimate
\begin{equation}\label{eq:Fcontra}
\|(Fu-Fv)(t,\cdot)\|_{L^q}\leq C\,\big(I_0(t)+I_1(t)\big)\,\|u-v\|_{X(T)}\,\big(\|u\|_{X(T)}^{\alpha}+\|v\|_{X(T)}^{\alpha}\big),
\end{equation}
where
\begin{align}
\label{eq:I0}
I_0(t) &= \int_0^t (1+t-s)^{\delta_q}\,(1+s)^{(1+\alpha)\delta_{1+\alpha}}\,ds,\\
\label{eq:I1}
I_1(t) &= \int_0^t e^{-c(t-s)}\,(1+s)^{(1+\alpha)\delta_{1+\alpha}}\,ds.
\end{align}
Recalling that $\delta_q>-1$ (see Remark~\ref{rem:-1}), we get\footnote{Estimates of this type of integrals that originate from Duhamel's principle is classical; they go back at least to~\cite{Segal}.}
\[ I_0(t)\approx (1+t)^{\delta_q} \iff (1+\alpha)\delta_{1+\alpha}<-1 \iff 1+\alpha>q_c. \]
On the other hand,
\[\begin{split}
\int_0^{t/2} e^{-c(t-s)}\,(1+s)^{(1+\alpha)\delta_{1+\alpha}}\,ds&\leq C\, e^{-c_1t}\ll (1+t)^{\delta_q}, \\
\int_{t/2}^t e^{-c(t-s)}\,(1+s)^{(1+\alpha)\delta_{1+\alpha}}\,ds&\leq C\,(1+t)^{(1+\alpha)\delta_{1+\alpha}}\leq C\,(1+t)^{-1}\leq C\,(1+t)^{\delta_q}.
\end{split}\]
Therefore, we proved
\begin{equation}\label{eq:FuFv}
\|(Fu-Fv)(t,\cdot)\|_{L^q}\leq C\,(1+t)^{\delta_q}\,\|u-v\|_{X(T)}\,\big(\|u\|_{X(T)}^{\alpha}+\|v\|_{X(T)}^{\alpha}\big),
\end{equation}
that is, we obtained~\eqref{eq:contraction}, and this concludes the proof for $n\geq4$.

When $n=2,3$, we shall use a different $L^1-L^1$ low-frequencies estimate in place of~\eqref{eq:deltaq}, since the damping does not influence it, say
\[ \|P_0u(t,\cdot)\|_{L^1}\leq (1+t)^{\tilde\delta_1} \|u_1\|_{L^1} \, n=2,3. \]
The exact growth rate~$\tilde\delta_1$ is not important; it is proved that $\tilde\delta_1=1$ in~\cite{S00} for $A=B=-\Delta$ (moreover, $\tilde\delta_1=1$ also for $B=0$ if $n=2$ (see~\cite[Theorem 3]{D'Abbicco-2025}), and for $A=-\Delta$ if $n=3$).

Then we fix $q_0\in(2,1+\alpha]$ if $n=2$ or $q_0\in(1,1+\alpha]$ if $n=3$, and we replace~\eqref{eq:norm} by
\begin{equation}\label{eq:normlow}
\|u\|_{X(T)} = \sup_{t\in[0,T]} \Big( (1+t)^{-\tilde\delta_1}\|u(t,\cdot)\|_{L^1}+(1+t)^{-\delta_{q_0}}\|u(t,\cdot)\|_{L^{q_0}}+(1+t)^{-\delta_{\bar q}}\|u(t,\cdot)\|_{L^{\bar q}} \Big).
\end{equation}
By interpolation ($1+\alpha\in[q_0,\bar q]$), we still obtain~\eqref{eq:decay}, and the proof follows with minor modifications.
\end{proof}


\section{Proof of Theorem~\ref{thm:inf}}\label{sec:proofinf}

Here we prove Theorem~\ref{thm:inf}.
\begin{proof}[Proof of Theorem~\ref{thm:inf}]
We follow the proof of Theorem~\ref{thm:main}, assuming $n\geq4$ for brevity (the cases $n=2$ and $n=3$ are treated as in the proof of Theorem~\ref{thm:main}). However, now the solution space is
\[ X(T)= L^\infty([0,T], L^1\cap L^\infty), \]
equipped with norm
\begin{equation}\label{eq:norminf}
\|u\|_{X(T)} = \sup_{t\in[0,T]} \Big( (1+t)^{-\delta_1}\|u(t,\cdot)\|_{L^1}+(1+t)^{-\delta_{q^\dagger}}\big(\|u(t,\cdot)\|_{q^\dagger}+\|u(t,\cdot)\|_{L^\infty}\big) \Big).
\end{equation}
In particular, by interpolation, any function~$u$ in $X(T)$ verifies the decay estimate~\eqref{eq:decay} and
\begin{equation}\label{eq:decay2}
\|u(t,\cdot)\|_{L^{p(1+\alpha)}} \leq (1+t)^{\max\{\delta_{p(1+\alpha)},\delta_{q^\dagger}\}}\,\|u\|_{X(T)},\quad \forall t\in[0,T].
\end{equation}
The crucial difference with the proof of Theorem~\ref{thm:main} is related to the high frequencies estimate for $L^\infty$, since the $L^1-L^\infty$ does not hold in Theorem~\ref{thm:high}. By duality, the $L^1-L^q$ estimate in Theorem~\ref{thm:high} is the same of the $L^{q'}-L^\infty$ estimate with $q'=q/(q-1)$, H\"older conjugate of $q$. In particular, we may fix $q=p'=p/(p-1)$, with $p>n/(2\theta_1)$ as in the statement of Theorem~\ref{thm:inf}. That is, by the dual estimate of Theorem~\ref{thm:high}, we obtain
\begin{equation}\label{eq:dual}
\|P_1u(t,\cdot)\|_{L^\infty} \leq C\,e^{-ct}\,\|u_1\|_{L^p}.
\end{equation}
Trivially, we also get
\begin{equation}\label{eq:dualgen}
\begin{split}
\|P_1u(t,\cdot)\|_{L^q}
    & \leq \|P_1u(t,\cdot)\|_{L^1} +\|P_1u(t,\cdot)\|_{L^\infty} \\
    & \leq C\,e^{-ct}\,\big(\|u_1\|_{L^1}+\|u_1\|_{L^p}\big), \qquad q\in[1,\infty].
\end{split}
\end{equation}
The proof of~\eqref{eq:ubasic} immediately follows by applying Theorem~\ref{thm:DAE}, and~\eqref{eq:dualgen}, with $q=1,q^\dagger,\infty$ (recalling that $\delta_\infty\leq \delta_{q^\dagger}$). We shall now prove~\eqref{eq:contraction}.

Due to the fact that we will assume a contraction for $u\in X(T)$ with sufficiently small norm $\|u\|_{X(T)}$, and
\[ \|u(t,\cdot)\|_{L^\infty}\leq (1+t)^{\delta_{q^\dagger}}\,\|u\|_{X(T)}\leq \|u\|_{X(T)}, \qquad t\in[0,T], \]
it follows that we may assume $\|u\|_{L^\infty}$ sufficiently small. This allows us to use~\eqref{eq:fsmall} in place of~\eqref{eq:f}.

As in the proof of Theorem~\ref{thm:main}, we obtain~\eqref{eq:Holder}, using~\eqref{eq:fsmall} and~\eqref{eq:decay}. Similarly, using~\eqref{eq:fsmall} and~\eqref{eq:decay2}, we find
\begin{equation}\label{eq:Holder2}\begin{split}
& \|(f(u)-f(v))(s,\cdot)\|_{L^p}\\
    & \qquad \leq C_1\,\|(u-v)(|u|^{\alpha}+|v|^{\alpha})(s,\cdot)\|_{L^p} \\
    & \qquad \leq C_1\,\|(u-v)(s,\cdot)\|_{L^{p(1+\alpha)}}\,\big(\|u(s,\cdot)\|_{L^{p(1+\alpha)}}^{\alpha}+\|v(s,\cdot)\|_{L^{p(1+\alpha)}}^{\alpha}\big)\\
    & \qquad \leq C_2\,(1+s)^{(1+\alpha)\delta_{1+\alpha}}\,\|u-v\|_{X(T)}\,\big(\|u\|_{X(T)}^{\alpha}+\|v\|_{X(T)}^{\alpha}\big),
\end{split}\end{equation}
where in the last step we used that
\[ \max\{\delta_{p(1+\alpha)},\delta_{q^\dagger}\}\leq \delta_{1+\alpha}, \]
due to $1+\alpha\leq q^\dagger$.

We first apply Theorem~\ref{thm:DAE}, and~\eqref{eq:dualgen}, with $q=1,q^\dagger$, together with both~\eqref{eq:Holder} and~\eqref{eq:Holder2}, to obtain~\eqref{eq:Fcontra}, where $I_0(t)$ and $I_1(t)$ are as in~\eqref{eq:I0} and~\eqref{eq:I1}. Recalling that $\delta_{q^\dagger}>-1$, we obtain~\eqref{eq:FuFv}.

Now let $q=\infty$. In this case, applying Theorem~\ref{thm:DAE}, and~\eqref{eq:dualgen}, but using that $\delta_\infty\leq \delta_{q^\dagger}$, we obtain~\eqref{eq:Fcontra} with
\[ I_0(t) = \int_0^t (1+t-s)^{\delta_{q^\dagger}}\,(1+s)^{(1+\alpha)\delta_{1+\alpha}}\,ds,\]
and $I_1(t)$ as in~\eqref{eq:I1}. That is, we obtain
\[ \|(Fu-Fv)(t,\cdot)\|_{L^\infty}\leq C\,(1+t)^{\delta_{q^\dagger}}\,\|u-v\|_{X(T)}\,\big(\|u\|_{X(T)}^{\alpha}+\|v\|_{X(T)}^{\alpha}\big), \]
in place of~\eqref{eq:FuFv}. Still, this concludes the proof of~\eqref{eq:contraction}, in view of the definition of $\|u\|_{X(T)}$.
\end{proof}
\begin{remark}
By minor modifications, the estimate for $\|u(t,\cdot)\|_{L^\infty}$ may be improved in the proof of Theorem~\ref{thm:inf} and, consequently, in its statement, but we omitted this improvement for the sake of brevity.
\end{remark}

\section{Conclusion}\label{sec:conclusion}

Theorem~\ref{thm:main} has the advantage of providing an efficient result assuming small initial data only in $L^1$ (the minimal assumption to apply $L^1-L^q$ estimates to treat the nonlinear problem). The main obstacle, however, is that the range of space dimension in which the result holds is bounded.

On the other hand, Theorem~\ref{thm:inf} has the advantage of providing a broad application of the existence result in any space dimension, by only assuming small initial data in $L^1\cap L^p$ (notice that $p\nearrow \infty$ as $n\nearrow\infty$).

However, the both results suffer of the fact that $q_c\geq \gamma(n-1)$ for sufficiently large space dimension~$n$ (Remark~\ref{rem:Strauss}). This may suggest that our approach, based on the $L^1-L^q$ estimates developed in~\cite[Theorem 6]{D'Abbicco-2025}, is of interest only in low space dimension.

On the other hand, the fact that it provides sharp result, in view of the counterpart results of blow-up in finite time for subcritical and critical exponents (see Examples~\ref{ex:2} and~\ref{ex:3}), means that the methodology is perfectly suitable to solve these problems.

It remains open the problem to prove more general blow-up results which may show the sharpness of the critical exponent~$q_c$. Or, otherwise, to find a different approach that allows to lower the existence exponent below the value $q_c$ in this paper (see also~\textsection\ref{sec:qp}).



\section*{Funding}

The author is supported by PRIN 2022 ``Anomalies in partial differential equations and applications'' CUP H53C24000820006 from Ministero dell'Universit\`a e Ricerca (Italia) and he is member of INdAM-GNAMPA group.

%
%
%
%
%
%
%
%
%
%
%
%

\begin{appendices}

\section{The critical exponent using $L^p-L^q$ estimates with $p>1$}\label{sec:qp}

In view of the competition between dissipation and dispersion, one may wonder if the decay rate of $L^1-L^q$ estimate, with $q=1+\alpha$, may be worse than the decay of the $L^p-L^q$ estimate, with $q=p(1+\alpha)$, for some $p\in(1,2)$. This phenomenon is reasonable in view of the oscillations, in particular, see~\eqref{eq:deltapq} with $\theta_0\nearrow\infty$, that is, for models closer to the semilinear wave equation without damping. Indeed, we have the following result, analogous to Theorem~\ref{thm:DAE}.
\begin{theorem}\label{thm:DAEp}[Theorem 6 in~\cite{D'Abbicco-2025}]
Assume $\theta_0>1$. Fix $p\in(1,2)$ and $q\in[p,p']$, where $p'=p/(p-1)$, such that the quantity
\[ d(p,q)=\frac{n}p-\frac{n-1}2 -\frac1q, \]
verifies $d(p,q)>1$. Then the solution to~\eqref{eq:CPgenlin} verifies the following low-frequencies $L^1-L^q$ estimate
\begin{equation}\label{eq:deltapq}\begin{split}
&\|P_0u(t,\cdot)\|_{L^q} \leq C\,(1+t)^{\delta_q}\,\|u_1\|_{L^p}, \qquad t\geq0,\\
&\delta_{p,q}=(n-1)\left(\frac1q-\frac12\right) - \frac1{\theta_0}\left(\frac{n}p-\frac{n+1}2 -\frac1q\right),
\end{split}\end{equation}
for some $C>0$.
\end{theorem}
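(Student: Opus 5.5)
The statement is quoted from~\cite[Theorem 6]{D'Abbicco-2025}, so the plan is to reconstruct its proof from the representation of the low-frequency solution. We write $\mathscr{F}P_0u(t,\xi)=\chi(\xi)\,K_0(t,\xi)\,\mathscr{F}u_1(\xi)$. For small $\xii$ the damped-oscillation regime \eqref{eq:DO} holds, so
\[ K_0(t,\xi)=e^{-\frac{b}2t}\,\frac{\sin(t\lambda(\xi))}{\lambda(\xi)},\qquad \lambda=\sqrt{\omega^2-b^2/4}\sim\omega. \]
We split $K_0$ into the two half-waves $e^{-\frac b2 t\pm it\lambda}/(2i\lambda)$. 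The phase $\lambda$ is a perturbation of the degree-one function $\omega$ by a term of order $\xii^{2\theta_0-1}$, so its Hessian still has rank $n-1$ near each direction. Next we localize dyadically, $\xii\sim 2^{-k}$, and for each piece prove two bounds.

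\textbf{Energy bound.} The $L^2\to L^2$ norm of a piece is at most $2^{k}\exp(-c\,t\,2^{-k\theta_0})$.

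\textbf{Dispersive bound.} The $L^1\to L^\infty$ norm is at most
\[ 2^{k}\,2^{-kn}\,\bigl(1+t2^{-k}\bigr)^{-\frac{n-1}2}\exp\bigl(-c\,t\,2^{-k\theta_0}\bigr). \]
This follows by scaling $\xi=2^{-k}\eta$ and applying stationary phase with rank $n-1$ (Littman's lemma). The damping factor $e^{-bt/2}$ is treated as a symbol: by \eqref{eq:b-low}, its derivatives satisfy the needed bounds uniformly after rescaling, as long as $t2^{-k\theta_0}\lesssim 1$, and otherwise we have exponential gain.

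\textbf{$L^p\to L^q$ bounds and summation.} For $p\in(1,2)$ and $q\in[p,p']$ we obtain the $L^p\to L^q$ bounds by Riesz--Thorin interpolation between the two estimates above, together with Bernstein's inequality to move between $q$ and $p'$ on each annulus. This gives, for each piece, a bound of the form
\[ 2^{-k(d(p,q)-1)}\,\bigl(t2^{-k}\bigr)^{-(n-1)\left(\frac12-\frac1q\right)}\exp\bigl(-c\,t\,2^{-k\theta_0}\bigr). \]
Summing over $k$, the exponential factor makes the dyadic sum concentrate at $2^{-k}\sim t^{-1/\theta_0}$. The hypothesis $d(p,q)>1$ guarantees convergence at the high end, $2^{-k}\to 1$ relative to the pivot. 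At the low end $2^{-k}\ll t^{-1/\theta_0}$, the exponential is harmless and the power of $2^{-k}$ gives convergence as a geometric series, provided the exponent has the right sign; the interplay with the oscillatory factor has to be checked here. Evaluating at the pivot, the total is
\[ t^{-(n-1)\left(\frac12-\frac1q\right)}\; t^{-\frac1{\theta_0}\left(d(p,q)-1-(n-1)\left(\frac12-\frac1q\right)\right)}. \]
Rearranging this exponent gives $\delta_{p,q}$. For $t\le1$ the bound is trivial.

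\textbf{Main obstacle.} The expected hardest step is the uniform stationary-phase bound with the $t$-dependent damping amplitude and the perturbed phase $\lambda$. We need the nondegeneracy of rank $n-1$ and the symbol estimates for $e^{-bt/2}$ to hold uniformly in $k$. We would handle this by rescaling and treating $t2^{-k\theta_0}$ as a bounded parameter in $C^{n+1}$ symbol classes. A second subtlety is to avoid losing endpoints when interpolating, since the sum is not absolutely convergent at borderline exponents; the strict inequality $d(p,q)>1$ is what saves this.
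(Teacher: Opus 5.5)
Your architecture (dyadic pieces, an energy bound plus a rescaled Littman bound, interpolation, Bernstein, summation around the pivot $2^{-k}\sim t^{-1/\theta_0}$) is a workable route to this estimate. The paper does not prove the statement; it imports it from the cited Theorem 6. However, your central computation is wrong, and the closing claim that ``rearranging this exponent gives $\delta_{p,q}$'' is false.

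\textbf{The per-piece bound.} Write $T_k$ for the $k$-th piece. For $q\ge2$, interpolating your two bounds gives
\[ \|T_k\|_{L^{q'}\to L^q}\lesssim 2^{-k(n(1-\frac2q)-1)}\bigl(1+t2^{-k}\bigr)^{-(n-1)(\frac12-\frac1q)}e^{-ct2^{-k\theta_0}}. \]
Bernstein must then be applied on the input side, from $L^p$ to $L^{q'}$; this is where $q\le p'$ enters. It adds the factor $2^{-kn(\frac1p-\frac1{q'})}$, hence
\[ \|T_k\|_{L^p\to L^q}\lesssim 2^{-k\left(n\left(\frac1p-\frac1q\right)-1\right)}\bigl(1+t2^{-k}\bigr)^{-(n-1)\left(\frac12-\frac1q\right)}e^{-ct2^{-k\theta_0}}. \]
Since $n(\frac1p-\frac1q)-1=d(p,q)-1+(n-1)(\frac12-\frac1q)$, for $t2^{-k}\ge1$ this equals $2^{-k(d(p,q)-1)}\,t^{-(n-1)(\frac12-\frac1q)}e^{-ct2^{-k\theta_0}}$. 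Your formula has lost a factor $2^{-k(n-1)(\frac12-\frac1q)}$.

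\textbf{Why the final exponent is wrong.} At the pivot, your formula gives $t^{\delta_{p,q}+\frac{n-1}{\theta_0}(\frac12-\frac1q)}$, which equals $t^{\delta_{p,q}}$ only when $q=2$. For $q>2$ you would prove a strictly weaker decay. Formally, for $n=3$, $\theta_0=2$, $p\to1$, $q=\infty$ you get $t^{-1}$ instead of $t^{-3/2}$. At $p=1$, $q=\infty$ your formula gives $t^{-\frac{n-1}2}$ for every $k$, whereas your own dispersive bound gives $2^{-k\frac{n-1}2}t^{-\frac{n-1}2}$. For $q<2$, with $t2^{-k}\ge1$ and $t2^{-k\theta_0}\lesssim1$, your formula is smaller than the actual norm of the piece; test by duality with data focusing at time $t$. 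So there it is not an upper bound at all.

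\textbf{The summation, corrected.} With the corrected bound the summation gives exactly $\delta_{p,q}$, but the roles of the two ends are the opposite of what you describe.
\begin{itemize}
\item For $t^{-1}\le2^{-k}\le t^{-1/\theta_0}$, the sum of $2^{-k(d(p,q)-1)}$ is geometric and controlled by its largest term precisely because $d(p,q)>1$. This is the low end, where you left the sign unchecked.
\item For $2^{-k}\ge t^{-1/\theta_0}$, convergence comes from $e^{-ct2^{-k\theta_0}}$, independently of $d$.
\item The block $|\xi|\lesssim t^{-1}$, where $|\sin(t\lambda)/\lambda|\le t$, contributes at most $t^{1-n(\frac1p-\frac1q)}=t^{-(d-1)-(n-1)(\frac12-\frac1q)}$. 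Since $\theta_0>1$, this is smaller than the pivot value $t^{-(n-1)(\frac12-\frac1q)-\frac{d-1}{\theta_0}}=t^{\delta_{p,q}}$.
\end{itemize}

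\textbf{The range $q\in[p,2)$.} This range belongs to the statement since $p<2$, but your tools cannot reach it. Interpolating $L^1\to L^\infty$ with $L^2\to L^2$ only produces outputs in $L^{s'}$ with $s'\ge2$. Bernstein can only raise the output exponent, and duality moves the input exponent above $2$. You need an extra $L^1\to L^1$ bound per piece: an $L^1$ bound on its kernel of size $2^{k}(1+t2^{-k})^{\frac{n-1}2}e^{-ct2^{-k\theta_0}}$. This again uses the rank $n-1$ condition, since the kernel lives in a layer of thickness $2^k$ around the $(n-1)$-dimensional surface $t\nabla\omega(S^{n-1})$. Interpolating it with $L^2$ gives $L^q\to L^q$, and Bernstein from $L^p$ to $L^q$ then gives again $2^{-k(d(p,q)-1)}t^{(n-1)(\frac1q-\frac12)}e^{-ct2^{-k\theta_0}}$ for $t2^{-k}\ge1$. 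The same summation applies.
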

For a given $\alpha>0$, we are interested in studying the decay rate $\delta_{p,q}$ with $q=p(1+\alpha)$. Taking the derivative with respect to $1/p$ in
\[ \delta_{p,p(1+\alpha)} = (n-1)\left(\frac1{p(1+\alpha)}-\frac12\right) - \frac1{\theta_0}\left(\frac{n}p-\frac{n+1}2 -\frac1{p(1+\alpha)}\right), \]
we find
\[ \partial_{1/p} \delta_{p,p(1+\alpha)} = \frac{n-1}{1+\alpha} - \frac1{\theta_0}\left(n-\frac1{1+\alpha}\right)= \frac{n-1+1/\theta_0}{1+\alpha} - \frac{n}{\theta_0}. \]
Due to the constant sign, this means that either $L^1-L^{1+\alpha}$ is the best estimate among $L^p-L^q$ estimates, to prove the existence argument, as used in this paper, or $L^{1+\frac1{1+\alpha}}-L^{2+\alpha}$ is the best one. The change in the best strategy happens when the critical exponent $q_c$ obtained using $L^1-L^{1+\alpha}$ become smaller than
\[ \frac{(n-1)\theta_0+1}{n} = \theta_0 - \frac{\theta_0-1}n. \]
Replacing the value for $q_c$, we find
\[\begin{split}
& q_c = \frac2{\theta_0+1}\,\frac{n\theta_0+1}{n-1}<\theta_0 - \frac{\theta_0-1}n \\
& \qquad \iff n> \tilde n(\theta_0)=\frac{2\theta_0^2+\theta_0+1+\sqrt{8\theta_0^3+9\theta_0^2-2\theta_0+1}}{2\theta_0(\theta_0-1)}.
\end{split}  \]
For $\theta_0=2$, this corresponds to
\[ n> \tilde n(2) = \frac{11+\sqrt{97}}4, \qquad \text{that is,} \quad n\geq5. \]
Comparing $\tilde n(\theta_0)$ with $n(\theta_0)$ in Remark~\ref{rem:Strauss}, unfortunately, we find
\[ \begin{split}
& n(\theta_0)=\frac{\theta_0^2+\theta_0+2}{\theta_0(\theta_0-1)} < \tilde n(\theta_0) \\
& \qquad \iff \theta_0+1<\sqrt{8\theta_0^3+9\theta_0^2-2\theta_0+1} \iff 2\theta_0^2+2\theta_0-1 >0.
\end{split} \]

\section{The critical exponent for non integrable data}

If the initial data are not assumed to be small in $L^1$, but only in $L^p$, for some $p>1$, it is well-known that the critical exponent is expected to change, in general (see, for instance, \cite{NakaoOno1993,IT05,DA21INDAM,DAbbicco2025JEEQ}). In this case, the existence exponent may be computed solving the equation $(1+\alpha)\delta_{p,p(1+\alpha)}=-1$ (provided that $p\leq 1+1/(1+\alpha)$), and the critical case belongs to the existence range.

However, in the case $p>1$ it may happen that $d(p,q)\leq1$, so that Theorem~\ref{thm:DAEp} is no longer applicable. In this case, the decay rate is expected to only depend on the dispersive action, and to be independent of the damping. Namely, we expect to find the same decay rate for the wave equation:
\[ \|P_0u(t,\cdot)\|_{L^q} \leq C\,(1+t)^{1-n\left(\frac1p-\frac1q\right)}\,\|u_1\|_{L^p}, \qquad d(p,q)\leq1. \]
Replacing $q=(1+\alpha)p$, we find
\[ -1 = (1+\alpha) \left(1-\frac{n}p\left(1-\frac1{1+\alpha}\right)\right) \iff \left(\frac{n}p-1\right)\alpha =2, \]
that is, the critical exponent is the Fujita (or Kato) exponent:
\[ q_c(p)=1 + \frac{2}{\frac{n}p-1} = \frac{n+p}{n-p}\,. \]
The result is valid provided that $pq_c\leq p'$, that is,
\[ p-1\leq \frac1{q_c} = \frac{n-p}{n+p} \iff np+p^2-n-p \leq n-p \iff n\geq n_0(p)=\frac{p^2}{2-p}\,, \]
and that $d(p,pq_c)\leq1$, that is,
\[ \begin{split}
\frac{n}p-\frac{n-1}2 -\frac1{pq_c} \leq1 & \iff \frac{n}p - \frac{n-p}{p(n+p)} \leq \frac{n+1}2 \\
    & \iff 2n^2+2np-2n+2p\leq n^2p+np^2+np+p^2 \\
    & \iff (2-p)n^2 +(p-2-p^2)n +2p-p^2 \leq0\\
    & \iff n\leq n_1(p)=\frac{p^2-p+2+\sqrt{p^4-6p^3+21p^2-20p+4}}{2(2-p)}.
\end{split} \]
We notice that $n_0(p)<n_1(p)$ for any $p\in(1,2)$, and both diverge at $\infty$ as $p\nearrow2$.

%
%
%
%
\end{appendices}


\bibliography{ref}


\end{document}